\theoremstyle{plain}
\newtheorem{thm}{Theorem}[section]
\newtheorem{lem}[thm]{Lemma}
\newtheorem{prop}[thm]{Proposition}
\theoremstyle{definition}
\numberwithin{equation}{section}
\newcommand{\bnum}{\begin{enumerate}}
\newcommand{\enum}{\end{enumerate}}
\begin{document}

\title{Representations induced from a  
normal subgroup of prime index}
\author[Soham Swadhin Pradhan]{Soham Swadhin Pradhan}
\address{Stat Math Unit, Indian Statistical Institute Bangalore}
\email{soham.spradhan@gmail.com}

%\subjclass[2010]{primary 20D60, 20F12, 20F70,  secondary 20C15}
%\keywords{Word map, Commutator, Irreducible character, finite group}
\begin{abstract}
Let $G$ be a finite group, $H$ be a normal subgroup of prime index $p$. Let $F$ be a field of either characteristic $0$ or prime to $|G|$. Let $\eta$ be an irreducible $F$-representation of $H$. If $F$ is an algebraically closed field of characteristic either $0$ or prime to $|G|$, then the induced representation $\eta \uparrow^{G}_{H}$ is either irreducible or a direct sum of $p$ pairwise inequivalent irreducible representations. In this paper, we show that if $F$ is not  assumed  algebraically  closed field, then there are five possibilities in the decomposition of induced representation into irreducible representations. 
\end{abstract}
\maketitle
\section{Introduction} 
Let $G$ be a finite group, and $H$ be a normal subgroup of prime index $p$ in $G$. Let $F$ be a field of characteristic $0$ or prime to $|G|$ (not necessarily algebraically closed field). 
This paper is motivated by the problem: ``Given an irreducible $F$-representation $\eta$ of $H$, determine the irreducible sub-representations of the induced representation $\eta \uparrow^{G}_{H}$ induced from $\eta$".

Let $F$ be an algebraically closed field of characteristic either $0$ or prime to $|G|$.
We denote the set of  conjugacy classes in $G$ by 
$\mathcal C_G$ and the set of irreducible representations of $G$ by 
$\Omega_{G}$. For an element $g$ in $G$,
we denote by $C_{G}(g)$ the $G$-conjugacy class of $g$. 
Let $H$ be normal subgroup of $G$. If $h$ in $H$, let $C_{H}(h)$ 
denotes the $H$-conjugacy class of $h$. Then $G$ starts acting on 
$\mathcal C_H$ by conjugation.
For $h \in H$, and $g$ in $G$, the $g$-action on $\mathcal C_H$ 
maps $C_{H}(h)$ onto $C_{H}(ghg^{-1})$.
In fact, $C_G(h)$ is a union of the $H$-conjugacy classes
$C_{H}(ghg^{-1})$ for all $g$ in $G$. In the $G$-action on $\mathcal C_H$, 
$H$ itself acts 
trivially so we have actually a $G/H$-action on $\mathcal C_H$; 
the $G$-conjugacy leaves the complement $G-H$ of $H$ also invariant, 
and so $G-H$ is  also a union of certain $G$-conjugacy classes.
It is well known that, for a finite group $G$, $\Omega_G$ and 
$\mathcal C_G$ contain the same number of elements,
and these two sets are dual to each other. When $H$ is a 
normal subgroup of $G$, then $G$ also starts acting on $\Omega_H$ 
by conjugation. Let $\eta$ be in $\Omega_H$, and $g$ in $G$. Then 
$g\cdot \eta(h)=\eta(ghg^{-1})$ is also a representation of $H$, 
which is conjugate representation conjugating by $g$ in $G$.
This action of $G$ restricted to $H$ is trivial, so we actually 
have the action of $G/H$. The following theorem (see \cite{cl50}, Theorem $13.52$) gives decomposition of $\eta \uparrow^{G}_{H}$ into irreducible sub-representations. We call this theorem `index-$p$ theorem'.
%A remarkable fact is that the $G/H$-actions on 
%$\Omega_H$ and 
%$\mathcal C_H$ have the same number of orbits.

%When $F$ is an algebraically closed field of characteristic $0$, the following theorem gives the structure of the induced representation $\eta \uparrow^{G}_{H}$. Before that we give definition of extension of representation.
%\begin{definition}
%	Let $G$ be a group, and $H$ a subgroup of $G$. Let $\eta$ be a 
%	representation of $H$. A representation $\rho$ of $G$ is said to be an 
%	{\it{extension}} of $\eta$ if $\rho \downarrow_{H}^{G} = \eta$.
%\end{definition}  
\begin{thm}(Index-$p$ theorem)\label{index}
	Let $G$ be a group, $H$ a normal subgroup of index $p$, 
	$p$ a prime. Let $F$ be an algebraically closed field of charcteristic $0$ or prime to $|G|$. Let $\eta$ be an irreducible representation of $H$ over $F$.
	\begin{enumerate}
		\item If the $G/H$-orbit of $\eta$ is a singleton, then $\eta$ extends
		to $p$ mutually inequivalent representations 
		$\rho_1, \rho_2, \ldots, \rho_p$  of $G$.
		\item If the $G/H$-orbit of $\eta$ consists of $p$ points 
		$\eta = \eta_1, \eta_2,\ldots, \eta_p$
		then the induced representations $\eta_1\uparrow_H^G$, 
		$\eta_2\uparrow_H^G,\ldots, \eta_p\uparrow_H^G$, are equivalent,
		say $\rho$ and $\rho$ is irreducible.
	\end{enumerate}
\end{thm}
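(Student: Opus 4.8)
The plan is to reduce everything to two structural facts. First, since $[G:H]=p$ is prime, $G/H\cong\mathbb Z/p\mathbb Z$ is cyclic, so every $G/H$-orbit on $\Omega_H$ has size dividing $p$, i.e.\ size $1$ or $p$; this is exactly the dichotomy of the two cases. Second, for a \emph{normal} subgroup Mackey's restriction formula collapses to
\[
(\eta\uparrow^{G}_{H})\downarrow_H \;\cong\; \bigoplus_{i=0}^{p-1}{}^{t^i}\eta,
\]
where $t\in G$ is chosen so that $tH$ generates $G/H$ (hence $t^p\in H$ and $G=\bigsqcup_{i=0}^{p-1}t^iH$) and ${}^{t^i}\eta$ is the conjugate $h\mapsto\eta(t^iht^{-i})$. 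Feeding this into Frobenius reciprocity yields
\[
\langle\eta\uparrow^{G}_{H},\,\eta\uparrow^{G}_{H}\rangle_G=\langle\eta,\,(\eta\uparrow^{G}_{H})\downarrow_H\rangle_H=\sum_{i=0}^{p-1}\langle\eta,\,{}^{t^i}\eta\rangle_H,
\]
and, $F$ being algebraically closed, each summand is $1$ or $0$ according as ${}^{t^i}\eta\cong\eta$ or not. This one computation governs both parts.

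Part (2) I would treat first, as it is shorter. When the orbit has $p$ points, ${}^{t^i}\eta\cong\eta$ only for $i=0$, so the norm above equals $1$; over an algebraically closed field $\langle\pi,\pi\rangle=1$ characterizes irreducibility, so $\rho:=\eta\uparrow^{G}_{H}$ is irreducible. To see the $\eta_j\uparrow^{G}_{H}$ all coincide with $\rho$, I would use that induction from a normal subgroup is unchanged by $G$-conjugation of the inducing representation, namely $({}^g\eta)\uparrow^{G}_{H}\cong\eta\uparrow^{G}_{H}$ for every $g\in G$ (immediate, e.g., by comparing induced characters). Since $\eta_2,\dots,\eta_p$ are precisely the $G$-conjugates of $\eta_1=\eta$, all $p$ induced modules are isomorphic to the single irreducible $\rho$.

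Part (1) is the substantive one. Here $\eta$ is $G$-invariant and the norm equals $p$, but this alone is not decisive, so the core of the argument is to exhibit a genuine extension of $\eta$ to $G$. By invariance there is an invertible $P$ with $P\,\eta(h)\,P^{-1}=\eta(tht^{-1})$ for all $h\in H$, unique up to scalar by Schur. Then $P^p$ intertwines $\eta$ with conjugation by $t^p\in H$, so $\eta(t^p)^{-1}P^p$ commutes with all $\eta(h)$ and hence equals a scalar $\lambda$; that is, $P^p=\lambda\,\eta(t^p)$. As $F$ is algebraically closed and $\mathrm{char}\,F\nmid p$, the equation $\mu^p=\lambda^{-1}$ has $p$ distinct solutions; for each I set $\tilde\eta(t^ih)=(\mu P)^i\eta(h)$, and the normalization $(\mu P)^p=\eta(t^p)$ makes this a well-defined representation of $G$ restricting to $\eta$. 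The $p$ choices of $\mu$ give $p$ extensions, pairwise inequivalent because an isomorphism between two of them restricts on $H$ to a scalar (Schur) which cannot reconcile the distinct operators $\tilde\eta(t)=\mu P$. Alternatively, with one extension $\tilde\eta$ in hand, Gallagher's theorem identifies the constituents of $\eta\uparrow^{G}_{H}$ with the $\tilde\eta\otimes\beta$, $\beta\in\Irr(G/H)$; the abelian group $G/H\cong\mathbb Z/p$ contributes exactly $p$ linear characters, giving $p$ distinct extensions each of multiplicity one.

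I expect the genuine obstacle to be this extension step. The norm computation only gives $\sum_\rho m_\rho^2=p$, which for $p\ge5$ does not by itself forbid a constituent of multiplicity greater than one (for instance $5=4+1$); excluding this really requires building the extension. It is precisely here that algebraic closedness is indispensable---both to apply Schur's lemma to $P$ and to $\lambda$, and to solve $\mu^p=\lambda^{-1}$---and the same hypothesis is, conceptually, why the cohomological obstruction in $H^2(G/H,F^\times)$ to extending a $G$-invariant irreducible vanishes for cyclic $G/H$.
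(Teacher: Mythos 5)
Your argument is correct, but there is nothing in the paper to compare it against: the paper states the index-$p$ theorem without proof, citing Clifford \cite{cl50}, and what you have reconstructed is precisely the standard Clifford-theory proof. Your two pillars --- the Mackey/Clifford restriction $(\eta\uparrow_H^G)\downarrow_H\cong\bigoplus_{i=0}^{p-1}{}^{t^i}\eta$ with Frobenius reciprocity for part (2), and the explicit extension via an intertwiner $P$ with $P^p=\lambda\,\eta(t^p)$ rescaled by the $p$ roots of $\mu^p=\lambda^{-1}$ for part (1) --- are sound, and you correctly identify that the norm computation alone cannot rule out multiplicities for $p\ge 5$, so the extension construction is genuinely needed. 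It is worth noticing that your construction runs exactly parallel to the machinery the paper does use later: your scalar $\lambda$ and the choice of a $p$-th root $\mu$ are the representation-level counterparts of Berman's relation $(\bar{C}(x))^p e_\eta=\lambda e_\eta$ and the elements $\mu_i=\sqrt[p]{\lambda_i}$ in Theorem 1.2, and your $p$ extensions $\tilde\eta$ with $\tilde\eta(t)=\zeta_p^{\,j}\mu P$ correspond one-to-one to the idempotents $e_{\rho_{i,j}}$ there; so your proof in effect re-derives the input the paper takes for granted. Two small points to tighten. First, in characteristic $\ell>0$ coprime to $|G|$ the character inner product takes values in $F$ and only determines $\dim\operatorname{Hom}$ modulo $\ell$, so you should phrase the reciprocity step as the module-level adjunction $\operatorname{Hom}_{FG}(\eta\uparrow_H^G,\eta\uparrow_H^G)\cong\operatorname{Hom}_{FH}(\eta,(\eta\uparrow_H^G)\downarrow_H)$ and count dimensions by Schur's lemma; with that reading your computation is an honest integer identity. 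Second, the well-definedness of $\tilde\eta(t^ih)=(\mu P)^i\eta(h)$ deserves the one-line verification $\eta(h)P=P\,\eta(t^{-1}ht)$ together with the case $i+j\ge p$, where $(\mu P)^p=\eta(t^p)$ is exactly what is needed; you assert this but do not check it. Also, the appeal to Gallagher is unnecessary overhead: once one extension $\tilde\eta$ exists, the projection formula gives $\eta\uparrow_H^G\cong\tilde\eta\otimes_F F[G/H]$, and since $F$ is algebraically closed with $\operatorname{char}F\nmid p$ the regular representation of $G/H$ splits into the $p$ linear characters, exhibiting the $p$ extensions directly --- this is the same mechanism the paper exploits (with $F[G/H]$ no longer split) in the proof of its Theorem 1.3(5).
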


%Let $G/H = \langle{\bar{x}}\rangle \approx C_{p}$. Let $x$ be a lift of $\bar{x}$ in $G$. 
%Given an irreducibe $F$-representation $\eta$ of $H$, the $x$-conjugate of $\eta$ is the composite of $\eta$ with the automorphism $h \mapsto x^{-1}hx$ of $H$, and is denoted by $\eta^{x}$. 
%In the above theorem, we have seen that either $\eta$ extends $p$ distinct ways or $\eta\uparrow^{G}_{H}$ is irreducible. 
The following theorem is due to Berman (see \cite{berman120}), when $F$ is an algebraically closed, which gives us an expression of the primitive central idempotents in $F[G]$ in terms of the primitive central idempotents in $F[H]$.
%It is  well known that if $\eta$ is equivalent to $\eta^{x}$ then the induced representation $\eta \uparrow^{G}_{H}$ is irreducible. Therefore, we restrict attention to 
%the case: $\eta$ is equivalent to $\eta^{x}$. 
\begin{thm}\label{berman}
Let $G$ be a finite group and $H$ be a normal subgroup of index $p$, a prime. Let $G/H = \langle {\bar{x}} \rangle$, for some $x$ in $G$. Let $F$ be an algebraically closed field of characteristic $0$ or prime to $|G|$. Let $\bar{C}(x)$ be the conjugacy class sum of $x$ in $F[G]$. Let $(\eta, W)$ be an irreducible $F$-representation of $H$ and $e_{\eta}$ be its corresponding primitive central idempotent in ${F}[H]$. 
We distinguish two cases:
\begin{itemize}
\item[(1)] If $e_{\eta}$ is a central idempotent in ${F}[G]$, 
then $\eta$ extends to $p$ distinct irreducible representations
$\rho_{0}, \rho_{1}, \dots , \rho_{p-1}$ (say) of $G$. 
It follows that $(\bar{C}(x))^{p}{e_{\eta}} = {\lambda}{e_{\eta}}$, where $\lambda \neq 0$.
		%where $\overline{C}_{G}(x)$ denotes the $G$-conjugacy class sum of $x$ and . 
For each $i$, $0 \leq i \leq p-1$, let $e_{\rho_{i}}$ be the primitive central idempotent corresponding to the representation $\rho_{i}$. Then
$${e_{\rho_{i}} = \frac{1}{p} 
\Big{(} 1 + \zeta^ic + \zeta^{2i} c^2 +\cdots +\zeta^{i(p-1)}c^{p-1}\Big{)} e_{\eta},}$$
where $c = \frac{\bar{C}{(x)}{e_{\eta}} }{\sqrt[p]{\lambda}}$ and $\zeta$ is a primitive $p^{th}$ root of unity in $F$.  
Moreover, $$e_{\eta} = e_{\rho_0} + e_{\rho_1} + \cdots + e_{\rho_{p-1}}.$$
		
\item[(2)] If $e_{\eta}$ is not a central idempotent in ${F}[G]$, then $\eta\uparrow_H^G, \eta^{x}\uparrow_H^G, \ldots ,\eta^{x^{p-1}}\uparrow_H^G$ are all equivalent to an irreducible representation $\rho$ (say) of $G$ over $F$ and in this case,
$$e_{\rho} = e_{\eta} + e_{\eta^{x}} + \cdots + e_{\eta^{x^{p-1}}}.$$
\end{itemize}
\end{thm}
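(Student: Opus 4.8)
The plan is to reduce the statement to a dichotomy governed by the centrality of $e_\eta$ in $F[G]$, and then to match the two cases against the Index-$p$ theorem. Writing $e_\eta = \frac{\dim\eta}{|H|}\sum_{h\in H}\chi_\eta(h^{-1})h$, conjugation by $g$ carries $e_\eta$ to $e_{\eta^g}$; since $G=\langle H,x\rangle$ and $e_\eta$ is already central in $F[H]$, the element $e_\eta$ is central in $F[G]$ iff $xe_\eta x^{-1}=e_\eta$, i.e. iff $e_{\eta^x}=e_\eta$, i.e. iff $\eta^x\cong\eta$, i.e. iff the $G/H$-orbit of $\eta$ is a singleton. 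Thus case (1) of the theorem is exactly case (1) of Theorem \ref{index} and case (2) corresponds to case (2). I would record this equivalence first, as it identifies which branch of the Index-$p$ theorem applies.

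For case (2) I would argue directly. Here $\eta,\eta^x,\dots,\eta^{x^{p-1}}$ are $p$ pairwise inequivalent irreducibles of $H$, so $e_\eta,e_{\eta^x},\dots,e_{\eta^{x^{p-1}}}$ are $p$ distinct, pairwise orthogonal primitive central idempotents of $F[H]$. Their sum $f=\sum_{i=0}^{p-1}e_{\eta^{x^i}}$ is an idempotent that is cyclically permuted, hence fixed, by conjugation by $x$, so $f$ is central in $F[G]$. By Theorem \ref{index}(2), $\rho=\eta\uparrow_H^G$ is irreducible and $\eta^{x^i}\uparrow_H^G\cong\rho$ for all $i$; by Clifford's theorem $\rho\downarrow_H\cong\bigoplus_{i}\eta^{x^i}$, so $\rho$ is the only irreducible of $G$ whose restriction meets the orbit. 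Consequently $e_\rho$ is the unique primitive central idempotent of $F[G]$ with $e_\rho e_{\eta^{x^i}}\neq 0$, and a support/dimension comparison forces $f=e_\rho$, giving $e_\rho=e_\eta+e_{\eta^x}+\cdots+e_{\eta^{x^{p-1}}}$.

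For case (1) I would study the algebra $A:=e_\eta F[G]$. Since $F$ is algebraically closed and $e_\eta$ is primitive central in $F[H]$, $B:=e_\eta F[H]\cong M_n(F)$ with $n=\dim\eta$. Using $G=\bigsqcup_{j=0}^{p-1}Hx^j$ gives $A=\bigoplus_{j}B\,(e_\eta x)^j$, a crossed product of $M_n(F)$ by the cyclic group $G/H$; because $\eta^x\cong\eta$, conjugation by $v:=e_\eta x$ induces an \emph{inner} automorphism of $B$, so (Skolem--Noether) after multiplying $v$ by a suitable unit of $B$ one obtains a \emph{central} unit $t\in A$ with $A=\bigoplus_j Bt^j$ and $t^p=\kappa e_\eta$ for a scalar $\kappa\in F^\times$; rescaling $t$ makes $t^p=e_\eta$. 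Hence $A\cong M_n(F)\otimes F[t]/(t^p-1)\cong\prod_{i=0}^{p-1}M_n(F)$, which reproves that $\eta$ extends to $p$ representations $\rho_0,\dots,\rho_{p-1}$ and identifies the $e_{\rho_i}$ as the central primitive idempotents of $A$, the eigenprojections $t\,e_{\rho_i}=\zeta^i e_{\rho_i}$. Next I would locate $\bar C(x)e_\eta$: every conjugate of $x$ lies in the coset $Hx$, so $\bar C(x)e_\eta\in Bt$, and being central in $A$ it must be a scalar multiple of $t$, say $\bar C(x)e_\eta=\beta_0 t$. Then $(\bar C(x)e_\eta)^p=\beta_0^p t^p=\beta_0^p e_\eta$, so $\lambda=\beta_0^p$, and $c=\bar C(x)e_\eta/\sqrt[p]{\lambda}$ equals $t$ up to a fixed $p$-th root of unity. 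The claimed formula is then pure finite Fourier inversion over $\mathbb Z/p$: from $\sum_{k=0}^{p-1}\zeta^{(i-j)k}=p\,\delta_{ij}$ one gets
\[
e_{\rho_i}=\frac1p\sum_{k=0}^{p-1}\zeta^{ik}c^k\,e_\eta,
\]
after relabelling the $\rho_i$ (equivalently fixing the branch of $\sqrt[p]{\lambda}$) so that $c\,e_{\rho_i}=\zeta^{-i}e_{\rho_i}$; summing over $i$ recovers $e_\eta=\sum_i e_{\rho_i}$.

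The main obstacle is the assertion $\lambda\neq 0$. By the analysis above this is equivalent to $\beta_0\neq0$, i.e. to $\bar C(x)e_\eta\neq0$, i.e. (taking central characters) to $\chi_{\rho_0}(x)\neq0$; only then is the central generator $t=c$ actually recovered from the class sum and is the displayed formula meaningful. This is the delicate point, because a priori the eigenvalues of $\rho_0(x)$, whose $p$-th power is the operator $\eta(x^p)$, could be arranged so that $\chi_{\rho_0}(x)=0$, which would force $\bar C(x)e_\eta=0$. Ruling this out, that is, showing the class sum $\bar C(x)$ does not annihilate the $\eta$-block so that $\bar C(x)e_\eta$ is the invertible central generator up to scalar, is where the real content of case (1) sits and is the step I would expect to need the most care.
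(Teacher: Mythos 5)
Your reduction of the dichotomy (centrality of $e_{\eta}$ in $F[G]$ iff $\eta^{x}\cong\eta$), your identification $e_{\rho}=e_{\eta}+e_{\eta^{x}}+\cdots+e_{\eta^{x^{p-1}}}$ in case (2), and your crossed-product analysis of the block $e_{\eta}F[G]\cong M_{n}(F)\otimes F[t]/(t^{p}-1)$ via Skolem--Noether, with the idempotent formula recovered by finite Fourier inversion, are all sound. (For comparison purposes: the paper states this theorem without proof, citing Berman's 1955 paper, so the only question is whether your argument is complete.) It is not: you explicitly leave unproved the one assertion that is not formal, namely $\lambda\neq 0$, equivalently $\bar{C}(x)e_{\eta}\neq 0$, equivalently $\chi_{\rho_{0}}(x)\neq 0$. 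Everything in your case (1) after the identity $\bar{C}(x)e_{\eta}=\beta_{0}t$ hinges on $\beta_{0}\neq 0$; without it $c$ is undefined and the displayed formula for $e_{\rho_{i}}$ is vacuous. So this is a genuine gap, not a routine verification you may wave through.

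Moreover, your suspicion that the eigenvalues of $\rho_{0}(x)$ ``could be arranged so that $\chi_{\rho_{0}}(x)=0$'' is correct: for an arbitrary lift $x$ of a generator of $G/H$ the claim $\lambda\neq 0$ is false, so no proof of it for the given $x$ exists. Take $G=S_{3}\times C_{2}$, $H=S_{3}\times 1$, $p=2$, $\eta$ the $2$-dimensional irreducible of $S_{3}$ (it is $G$-invariant, so $e_{\eta}$ is central in $F[G]$ and we are in case (1)), and $x=((12),c)$ with $c$ generating $C_{2}$. The class of $x$ is $\{(\tau,c):\tau \text{ a transposition}\}$, the two extensions of $\eta$ are $\eta\otimes 1$ and $\eta\otimes\mathrm{sgn}$, and both character values at $x$ are $\chi_{\eta}((12))\cdot(\pm 1)=0$; hence $\bar{C}(x)e_{\eta}=0$ and $\lambda=0$. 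What is true, and what your own block analysis essentially proves, is that a suitable class inside the coset $Hx$ works: since $e_{\eta}$ is a central idempotent of the semisimple algebra $F[G]$, $Z(e_{\eta}F[G])=e_{\eta}Z(F[G])$ is spanned by the elements $e_{\eta}\bar{D}$ as $D$ runs over the conjugacy classes of $G$; each class lies in a single coset $Hx^{j}$, so $e_{\eta}\bar{D}\in Ft^{j}$, and since $t$ lies in this center there must exist a class $D\subseteq Hx$ with $e_{\eta}\bar{D}=\beta t$, $\beta\neq 0$. Replacing $x$ by a representative $x'\in D$ (still a lift of a generator of $G/H$) closes your gap and makes the whole argument complete; in other words, the theorem as quoted needs this choice of coset representative as a hypothesis, and the non-vanishing is secured by selecting the class, not by proving $\chi_{\rho_{0}}(x)\neq 0$ for every $x$.
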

{\bf{Notations.}} We need some notations to state our theorem. 
%Let $G$ be a finite group. 
Let $G$ be a group, $H$ a normal subgroup of index $p$, 
$p$ a prime. Let $F$ be a field of characteristic $0$ or prime to $|G|$. Let $\bar{F}$ be the algebraic closure of $F$. 
Let $\Phi_{p}(X)$ denotes the $p$-th cyclotomic polynomial over $F$. Let $\zeta_{p}$ be a primitive $p$-th root of unity in $\bar{F}$.
%Let $\Phi_{p}(X) = f_{1}(X)f_{2}(X) \dots f_{k}(X)$ with $f_{1}(X) = X - 1$ be the factorization into irreducible polynomials over $F$. 
Let $X^{p} - 1 = f_{0}(X)f_{1}(X) \dots f_{k}(X)$ with $f_{0}(X) = X - 1$ be the factorization into irreducibles over $F$. Let $\eta$ be an irreducible $F$-representation of $H$. Then by Schur's theory (see \cite{sc70}) on group representations
$$\eta \otimes_{F}{\bar{F}} \cong m(\eta_{1} \oplus \eta_{2} \oplus \dots \oplus \eta_{k}),$$
where $\eta_{i}'$s are Galois conjugates over $F$ and $m$ is Schur indices of $\eta_{i}$'s over $F$. One can show that either $\eta_{i} \ncong \eta^{x}_{i}$, for all $i$, $1 \leq i \leq k$ or $\eta_{i} \cong \eta^{x}_{i}$, for all $i$, $1 \leq i \leq k$. If $\eta_{i} \cong \eta^{x}_{i}$, for all $i$, $1 \leq i \leq k$, then by Theorem \ref{index}, each $\eta_{i}$ extends to $p$ distinct ways. For each $i$, let $\rho_{i,o}, \rho_{i, 1}, \dots , \rho_{i, p-1}$ be the $p$ extensions of $\eta_{i}$. Let $\chi_{\eta_{i}}$ be the character corresponding to the representation $\eta_{i}$. Let $e_{\eta_{i}}$ be the primitive central idempotent corresponding to $\eta_{i}$ in $\bar{F}[H]$. Let $e_{\rho_{i, j}}$ be the primitive central idempotent corresponding to $\rho_{i, j}$ in $\bar{F}[G]$. By Theorem \ref{berman}, $(\bar{C}(x))^{p}{e_{\eta_{i}}} = {\lambda_{i}}{e_{\eta_{i}}}$, where $\lambda_{i}$ is a nonzero element in $\bar{F}$. Let $\mu_{i}$ be a $p$-th root of $\lambda_{i}$ in $\bar{F}$. By Theorem \ref{berman}, for each $i$, the primitive central idempotent $e_{\rho_{i, j}}$ is:
$$e_{\rho_{i, j}} = \frac{1}{p}\Big\{1 + {\zeta_{p}}^{j}\big(\frac{\bar{C}(x)}{\mu_{i}}\big) + \dots + {\zeta_{p}}^{j(p-1)}
\big({\frac{\bar{C}(x)}{\mu_{i}}}\big)^{p-1}\Big\}
e_{\eta_{i}},$$
where $0 \leq j \leq p-1$.

Let $\mathscr{G}$ denotes Gal$(F(\chi_{\eta_{i}})/F)$. For $1 \leq i, j \leq k$, as $\eta_{i}$ and $\eta_{j}$ are Galois conjugate over $F$, there is an element $\sigma \in \mathscr{G}$ s.t. $\sigma(e_{\eta_{i}}) = e_{\eta_{j}}$. So $$\{\bar{C}(x)\}^{p}{e_{\eta_{j}}} = \{\bar{C}(x)\}^{p}\sigma(e_{\eta_{i}}) 
= \sigma\{\bar{C}(x)^{p}{e_{\eta_{i}}}\} = \sigma({\lambda_{i}}e_{\eta_{i}}) = \sigma(\lambda_{i}){e_{\eta_{j}}}.$$ 
Therefore, if $\sigma(e_{\eta_{i}}) = e_{\eta_{j}}$, then $\sigma(\lambda_{i}) = {\lambda_{j}}$, and so there is an element $\gamma$ in Gal$(\bar{F}/F)$ such that $\gamma(\mu_{i}) = \mu_{j}$. So one can conclude that either $\mu_{i} \in F$, for all $1 \leq i \leq k$, in this case $\mu _{1} = \mu_{2} = \dots = \mu_{k}$ or $\mu_{i} \notin F$, for all $1 \leq i \leq k$.
%Let $\bar{C}(x)$ denotes the conjugacy class sum of $x$ in $F[G]$. 
We use all the above notations to state our theorem.
\begin{thm}
Let $G$ be a finite group. Let $H$ be a normal subgroup of index $p$, a prime in $G$. Let $G/H = \langle{\bar{x}}\rangle$. Let $x$ be a lift of $\bar{x}$ in $G$. Let $F$ be a field of characteristic $0$ or prime to $|G|$. Let $\bar{F}$ be the algebraic closure of $F$. Let $\eta$ be an irreducible $F$-representation of $H$. Let 
$$\eta \otimes_{F}{\bar{F}} \cong m(\eta_{1} \oplus \eta_{2} \oplus \dots \oplus \eta_{k}),$$
where $\eta_{i}'$s are Galois conjugates over $F$ and $m$ is the Schur indices of $\eta_{i}$'s over $F$.
\begin{enumerate}
\item[1.] If $\eta \ncong \eta^{x}$, then $p$ number of inequivalent $H$-representatioons $\eta, \eta^{x}, \dots, \eta^{x^{p-1}}$ induce the same irreducible representation.

\item[2.]  If $\eta \cong \eta^{x}$ and $\eta_{i} \ncong \eta^{x}_{i}$, for all $i$, $1 \leq i \leq k$, then $\eta \uparrow^{G}_{H}$ is direct sum of a single irreducible $F$-representation of $G$.

\item[3.] If $\eta \cong \eta^{x}$, $\eta_{i} \cong \eta^{x}_{i}$ and $\mu_{i} \notin F$,  for all $i$, $1 \leq i \leq k$, then $\eta \uparrow^{G}_{H}$ is direct sum of a single irreducible $F$-representation of $G$.

\item[4.] If $\eta \cong \eta^{x}$, $\eta_{i} \cong \eta^{x}_{i}$, $\mu_{i} \in F$,  for all $i$, $1 \leq i \leq k$ and $\zeta_{p} \in F$, then $\eta \uparrow^{G}_{H}$ is direct sum of $p$ inequivalent irreducible $F$-representation of $G$.

\item[5.] If $\eta \cong \eta^{x}$, $\eta_{i} \cong \eta^{x}_{i}$, $\mu_{i} \in F$,  for all $i$, $1 \leq i \leq k$ and $\zeta_{p} \notin F$, then $\eta \uparrow^{G}_{H}$ is direct sum of $1 + k$ inequivalent irreducible $F$-representation of $G$.
\end{enumerate}
\end{thm}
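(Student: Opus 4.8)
The plan is to reduce everything to the algebraically closed case and then reassemble the $\bar F$-constituents into Galois orbits. The starting point is that induction commutes with extension of scalars, so that
\[
(\eta\uparrow^{G}_{H})\otimes_{F}\bar F\;\cong\;(\eta\otimes_{F}\bar F)\uparrow^{G}_{H}\;\cong\;m\bigoplus_{i=1}^{k}\big(\eta_{i}\uparrow^{G}_{H}\big),
\]
together with the general principle that the irreducible $F$-constituents of an $FG$-module $V$ are in bijection with the $\mathrm{Gal}(\bar F/F)$-orbits of the irreducible $\bar F$-constituents of $V\otimes_{F}\bar F$, a single orbit corresponding to one irreducible $F$-representation. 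Thus the whole problem becomes: apply Theorem~\ref{index} to each $\eta_{i}\uparrow^{G}_{H}$ over $\bar F$, use Theorem~\ref{berman} for the idempotents, and then count Galois orbits while keeping track of multiplicities.

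For statement (1) I would argue with endomorphism rings. By Mackey's formula $(\eta\uparrow^{G}_{H})\downarrow_{H}\cong\bigoplus_{j=0}^{p-1}\eta^{x^{j}}$, so $\mathrm{End}_{FG}(\eta\uparrow^{G}_{H})\cong\bigoplus_{j=0}^{p-1}\mathrm{Hom}_{FH}(\eta,\eta^{x^{j}})$. When $\eta\ncong\eta^{x}$ the $G/H$-orbit of $\eta$ has size $p$, a prime, so $\eta\ncong\eta^{x^{j}}$ for $1\le j\le p-1$; only the $j=0$ term survives and $\mathrm{End}_{FG}(\eta\uparrow^{G}_{H})\cong\mathrm{End}_{FH}(\eta)$ is a division ring, whence $\eta\uparrow^{G}_{H}$ is irreducible over $F$. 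Since conjugation by $x$ does not change an induced representation, $\eta,\eta^{x},\dots,\eta^{x^{p-1}}$ all induce this same irreducible. For statement (2), when $\eta_{i}\ncong\eta_{i}^{x}$ each $\eta_{i}\uparrow^{G}_{H}$ is already irreducible over $\bar F$ by Theorem~\ref{index}(2); the Galois group permutes the $\eta_{i}$ transitively and commutes with induction, so the $\eta_{i}\uparrow^{G}_{H}$ form a single orbit and $\eta\uparrow^{G}_{H}$ is $F$-irreducible.

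The heart of the argument is statements (3)–(5), where $\eta_{i}\cong\eta_{i}^{x}$ and each $\eta_{i}\uparrow^{G}_{H}\cong\bigoplus_{j=0}^{p-1}\rho_{i,j}$ by Theorem~\ref{index}(1). Here I would feed Berman's explicit formula for $e_{\rho_{i,j}}$ into the action of $\sigma\in\mathrm{Gal}(\bar F/F)$. Since $\bar C(x)$ is Galois-fixed, writing $\sigma(e_{\eta_{i}})=e_{\eta_{i'}}$, $\sigma(\zeta_{p})=\zeta_{p}^{t}$ and $\sigma(\mu_{i})=\zeta_{p}^{c}\mu_{i'}$, the formula gives $\sigma(e_{\rho_{i,j}})=e_{\rho_{i',\,tj-c}}$. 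Counting $\mathrm{Gal}(\bar F/F)$-orbits on the index set $\{(i,j)\}$ then separates the three cases: if $\mu_{i}\notin F$, a Galois element fixing $\zeta_{p}$ but twisting $\mu_{i}$ produces a nontrivial translation $j\mapsto j-c$, which is transitive on $\mathbb{Z}/p$ and glues all the $e_{\rho_{i,j}}$ into one orbit, giving a single $F$-irreducible; if $\mu_{i}\in F$ and $\zeta_{p}\in F$, then $t=1$ and $c=0$, the orbits are indexed by $j$, giving $p$ irreducibles; if $\mu_{i}\in F$ and $\zeta_{p}\notin F$, then $c=0$ but $t$ runs over the cyclotomic image, so $j=0$ forms one orbit while the indices $j\neq0$ break up according to the cosets of that image, one orbit per irreducible factor of $\Phi_{p}(X)$, giving $1+k$ irreducibles.

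The step I expect to be the main obstacle is twofold. First, in case (5) one must pin the orbit count down exactly: this rests on the interaction between the character field $F(\chi_{\eta_{i}})$, the cyclotomic field $F(\zeta_{p})$, and the factorization $\Phi_{p}=f_{1}\cdots f_{k}$, i.e. on showing that $F(\chi_{\eta_{i}})$ and $F(\zeta_{p})$ are linearly disjoint over $F$ and that the number of Galois conjugates of $\eta$ coincides with the number of irreducible factors of $\Phi_{p}$, so that the $j\neq0$ indices collapse into precisely $k$ orbits. Second, in all of (2)–(5) one must establish multiplicity-freeness, i.e. that each $F$-irreducible occurs exactly once; since the constituent $V$ attached to an orbit occurs with multiplicity $m/s_{V}$, where $s_{V}$ is its Schur index, this reduces to proving $s_{V}=m$. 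I would obtain this either from the fact that $\eta$ extends to $G$, so the extensions $\rho_{i,j}$ inherit the Schur index $m$ of $\eta_{i}$, or, equivalently, by identifying $\mathrm{End}_{FG}(\eta\uparrow^{G}_{H})$ with the crossed product $\mathrm{End}_{FH}(\eta)\otimes_{L}L[t]/(t^{p}-\delta)$ over $L=F(\chi_{\eta_{i}})$ and checking that each of its simple factors is a division ring; the factorization of $t^{p}-\delta$ over $L$ mirrors exactly the case split above.
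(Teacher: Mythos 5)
Your overall strategy coincides with the paper's: extend scalars via Lemma~\ref{lem}, decompose over $\bar F$ with Theorems~\ref{index} and~\ref{berman}, and count $\mathrm{Gal}(\bar F/F)$-orbits of the idempotents $e_{\rho_{i,j}}$; your endomorphism-ring proof of (1) is an acceptable substitute for the paper's Mackey-criterion argument, and your explicit affine action $\sigma(e_{\rho_{i,j}})=e_{\rho_{i',\,tj-c}}$ makes precise what the paper leaves implicit in (3) and (4). The genuine flaw is your multiplicity discussion. The theorem does not assert multiplicity-freeness, and it would be false to prove it: in case (2) we have $(\eta\uparrow^{G}_{H})\otimes_F\bar F\cong mp\,(\eta_1\uparrow^{G}_{H}\oplus\dots\oplus\eta_l\uparrow^{G}_{H})$ with $lp=k$, so the unique constituent $V$ occurs with multiplicity $mp/s_V$, not $m/s_V$ as you wrote. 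Concretely, take $G=S_3$, $H=A_3$, $F=\mathbb{Q}$, $\eta$ the irreducible two-dimensional rational representation of $A_3$ (so $m=1$, $k=2$, and the hypotheses of (2) hold): then $\eta\uparrow^{G}_{H}\cong 2V$ with $V$ the standard representation, so your claim that $\eta\uparrow^{G}_{H}$ is $F$-irreducible fails, and no argument can yield ``each $F$-irreducible occurs exactly once.'' The paper itself never claims this: its proof of (4) explicitly writes $\eta\uparrow^{G}_{H}=\bigoplus_{j}(m/m_j)\psi_j$, allowing $m_j\neq m$, and the phrase ``direct sum of a single irreducible'' is meant isotypically. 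Your plan to prove that the extensions $\rho_{i,j}$ inherit the Schur index $m$ of $\eta_i$ is thus both unnecessary for the statement and not in general true; once you drop it, your orbit counting in (2)--(4) reproduces the paper's proof.

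On (5) the paper takes a different and cheaper route than your coset count: since $\mu\in F$, the representation $\eta$ extends to an $F$-irreducible $\rho$ with $\rho\otimes_F\bar F\cong m(\rho_{1,0}\oplus\dots\oplus\rho_{k,0})$, whence $\eta\uparrow^{G}_{H}\cong F[G/H]\otimes_F\rho$, and it then splits $F[G/H]\cong F\oplus\bigoplus_i F[X]/\langle f_i(X)\rangle$ using $X^p-1=(X-1)f_1(X)\cdots f_k(X)$. Your flagged obstacle --- that the number of Galois conjugates of $\eta$ equals the number of irreducible factors of $\Phi_p(X)$ over $F$ --- is real, and in fact it cannot be overcome, because the coincidence is false: take $G$ cyclic of order $5$, $H=\{1\}$, $F=\mathbb{Q}(\sqrt{5})$, $\eta$ trivial; all hypotheses of (5) hold with one Galois conjugate ($k=1$ in the theorem's sense), yet $\Phi_5(X)$ has two irreducible factors over $F$ and $\eta\uparrow^{G}_{H}=F[G]$ has three pairwise inequivalent constituents. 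The paper obscures this by using the same letter $k$ both for the number of conjugates $\eta_i$ and for the number of irreducible factors of $\Phi_p(X)$ in its Notations; its own argument (and your count of cosets of the image of $\mathrm{Gal}$ in $(\mathbb{Z}/p)^{\times}$, namely $(p-1)/[F(\zeta_p):F]$) actually yields $1+\#\{\text{irreducible factors of }\Phi_p(X)\text{ over }F\}$ constituents, which is the correct reading of statement (5). So here your instinct located precisely the paper's weak joint: the right fix is to restate (5) with $k$ the number of irreducible factors of $\Phi_p(X)$, not to attempt a proof of the identification (and, similarly, your linear-disjointness worry is dissolved rather than solved by the $F[G/H]\otimes_F\rho$ decomposition, modulo checking that each summand $F[X]/\langle f_i(X)\rangle\otimes_F\rho$ is irreducible, a point the paper also leaves tacit).
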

\section{Preliminary Results}
We now state the following lemma without proof, which is immediate from the definitions.
\begin{lem}\label{lem}
Let $G$ be a finite group and $H$ be a normal subgroup of $G$. Let $F$ be a field of characteristic either $0$ or prime to $|G|$. Let $E$ be a Galois extension of $F$. Let $\gamma \in \textrm{Gal}(E/F)$. Let $\rho$ be an $E$-representation of $H$ and $\eta$ be an $F$-representation of $H$. Then 
\begin{center}
$(\eta \otimes_{F}{E})\uparrow^{G}_{H} \cong (\eta \uparrow^{G}_{H}) \otimes_{F}{E}$
\textrm{and} 
$(\rho^{\gamma} )\uparrow^{G}_{H} \cong (\rho \uparrow^{G}_{H})^{\gamma}.$
\end{center}
\end{lem}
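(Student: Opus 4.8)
The plan is to use the concrete realization of induction as the tensor functor $\eta\uparrow^{G}_{H} \cong FG\otimes_{FH}V$, where $V$ is the $FH$-module affording $\eta$ (equivalently, its block-matrix description relative to a fixed set of coset representatives), and to observe that both operations in play---extension of scalars and Galois conjugation---are ``scalar-level'' operations that commute with the purely combinatorial coset data of induction. I would treat the two claimed isomorphisms separately, since the first is a base-change statement and the second is a Galois-descent statement, though both ultimately reduce to associativity of the tensor product.

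For the first isomorphism I would write $EG = FG\otimes_{F}E$ and $EH = FH\otimes_{F}E$, and note that the $EH$-module affording $\eta\otimes_{F}E$ is $V\otimes_{F}E$. Then I would exhibit the chain of natural isomorphisms
\[
(\eta\otimes_{F}E)\uparrow^{G}_{H} \;\cong\; EG\otimes_{EH}(V\otimes_{F}E) \;\cong\; (FG\otimes_{FH}V)\otimes_{F}E \;\cong\; (\eta\uparrow^{G}_{H})\otimes_{F}E,
\]
the middle identification being the standard associativity/base-change identity for the functor $FG\otimes_{FH}(-)$ followed by $(-)\otimes_{F}E$. The only point requiring care is to check that this vector-space isomorphism is in fact a map of $EG$-modules, which is immediate once one tracks the action of $G$ and of the central scalars from $E$ through each step.

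For the second isomorphism I would first fix the definition of the Galois twist: realizing $\rho$ by matrices in a basis of $W$, the conjugate $\rho^{\gamma}$ is obtained by applying $\gamma$ entrywise, or functorially by restriction of scalars along $\gamma\colon E\to E$. Choosing coset representatives $g_{1},\dots,g_{n}$ for $H$ in $G$, the induced representation $\rho\uparrow^{G}_{H}$ has the block form whose $(i,j)$ block is $\rho(g_{i}^{-1}gg_{j})$ when $g_{i}^{-1}gg_{j}\in H$ and $0$ otherwise. Applying $\gamma$ entrywise sends this block to $\gamma(\rho(g_{i}^{-1}gg_{j})) = \rho^{\gamma}(g_{i}^{-1}gg_{j})$, which is exactly the corresponding block of $(\rho^{\gamma})\uparrow^{G}_{H}$ computed with the same representatives. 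Hence $(\rho\uparrow^{G}_{H})^{\gamma}$ and $(\rho^{\gamma})\uparrow^{G}_{H}$ agree in this basis, giving the equivalence; equivalently, since $\gamma$ induces a ring automorphism of $EG$ fixing $G$ and restricting to $\gamma$ on scalars, conjugation is again a base change that commutes with $EG\otimes_{EH}(-)$.

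The statement is genuinely ``immediate from the definitions,'' so there is no serious obstacle; the only points demanding attention are (i) verifying that the base-change isomorphism respects the full $EG$-module structure and not merely the underlying $F$-space, and (ii) ensuring the Galois twist is specified in a basis-independent (functorial) way so that the entrywise computation is meaningful. Both are routine once induction is expressed through the tensor functor, and indeed the cleanest unified argument simply records that induction is the tensor functor $(-)\uparrow^{G}_{H}=EG\otimes_{EH}(-)$, while extension of scalars and Galois conjugation are base changes along $F\to E$ and $\gamma\colon E\to E$ respectively, so all three commute by associativity of $\otimes$.
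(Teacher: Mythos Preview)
Your argument is correct and is precisely the sort of verification one gives for this statement. The paper in fact offers no proof at all: it declares the lemma ``immediate from the definitions'' and moves on. What you have written is the natural unpacking of that phrase---expressing induction as $FG\otimes_{FH}(-)$ so that base change along $F\to E$ commutes with it by associativity of tensor, and checking the Galois case by the block-matrix description (or equivalently by viewing $\gamma$ as a ring automorphism of $EG$ fixing $G$). There is no methodological difference to report: you simply supplied the details the paper omitted.
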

\begin{prop}\label{decom}
Let $F$ a field of characteristic $0$ or prime to $n$.
Let $\displaystyle \Phi_n(X) = f_1(X) \dots f_k(X)$ be the decomposition of $\Phi_n(X)$ into irreducible factors over $F$. Then
\begin{enumerate}
\item Degrees of all $f_i(X)$'s are same.
\item Let $\zeta$ be a root of one $f_i(X)$. Then all the 
roots of $f_i(X)$ are $\{\zeta^{r_1}, \zeta^{r_2}, \ldots,\zeta^{r_s}\}$, 
where all $r_i$'s are natural numbers with $r_1=1$, and 
the sequence $\{ r_1, r_2,\ldots, r_s \}$ is independent of 
irreducible factors of $\Phi_n(X)$ and any root of $\Phi_n(X)$.
\end{enumerate}
\end{prop}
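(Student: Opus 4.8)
The plan is to realize the factorization as the orbit decomposition of a Galois action on the set of primitive $n$-th roots of unity. First I would fix a primitive $n$-th root of unity $\zeta_n$ in $\bar F$ and observe that, since $\mathrm{char}\,F$ is $0$ or prime to $n$, the polynomial $X^n-1$ is separable; as $\Phi_n(X)$ divides it, $\Phi_n$ is separable with $\varphi(n)$ distinct roots, namely $\{\zeta_n^{a} : a \in (\mathbb{Z}/n\mathbb{Z})^{\times}\}$. All these roots lie in $E := F(\zeta_n)$, so $E$ is the splitting field of $\Phi_n$ over $F$ and $E/F$ is Galois. Let $\Gamma = \mathrm{Gal}(E/F)$. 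Each $\sigma \in \Gamma$ fixes $F$ and permutes the primitive roots, so $\sigma(\zeta_n) = \zeta_n^{c(\sigma)}$ for a unique $c(\sigma) \in (\mathbb{Z}/n\mathbb{Z})^{\times}$, and $\sigma \mapsto c(\sigma)$ is an injective homomorphism of $\Gamma$ into the abelian group $(\mathbb{Z}/n\mathbb{Z})^{\times}$. Writing $G$ for its image, we have $\sigma(\zeta_n^{a}) = \zeta_n^{c(\sigma)a}$; that is, $\Gamma$ acts on the exponents by multiplication by the elements of the subgroup $G$.

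The key step is then the standard fact that, since $E/F$ is Galois and $\Phi_n$ splits completely in $E$, two roots of $\Phi_n$ are roots of the same irreducible factor over $F$ if and only if they lie in the same $\Gamma$-orbit; hence the root sets of $f_1(X), \ldots, f_k(X)$ are exactly the $\Gamma$-orbits. Under the exponent identification, the orbit of $\zeta_n^{a}$ is $\{\zeta_n^{ra} : r \in G\}$, which corresponds to the coset $Ga$ of $G$ in $(\mathbb{Z}/n\mathbb{Z})^{\times}$. Part (1) now follows at once: every coset of $G$ has cardinality $s := |G|$, so each $f_i(X)$ has the same degree $s$.

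For part (2), I would pick any root $\zeta = \zeta_n^{a}$ of a given factor $f_i(X)$ and compute its orbit as
\[
\{\zeta_n^{ra} : r \in G\} = \{(\zeta_n^{a})^{r} : r \in G\} = \{\zeta^{r} : r \in G\}.
\]
Enumerating $G = \{r_1, r_2, \ldots, r_s\}$ with $r_1 = 1$, the roots of $f_i$ are precisely $\zeta^{r_1}, \ldots, \zeta^{r_s}$. Crucially the exponent set is the fixed subgroup $G$ itself, depending on neither $i$ nor the chosen root: if $\zeta' = \zeta^{r'}$ is another root of $f_i$ (so $r' \in G$), then $\{(\zeta')^{r} : r \in G\} = \{\zeta^{r'r} : r \in G\} = \{\zeta^{r''} : r'' \in G\}$ because $r'G = G$. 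This gives the asserted independence.

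The only genuinely content-bearing point is the identification of irreducible factors with Galois orbits in the second paragraph; once that is in place, everything reduces to elementary coset bookkeeping in the abelian group $(\mathbb{Z}/n\mathbb{Z})^{\times}$. The one thing to state carefully is that the common sequence $\{r_1, \ldots, r_s\}$ is literally the subgroup $G = \mathrm{Gal}(E/F)$ viewed inside $(\mathbb{Z}/n\mathbb{Z})^{\times}$, which is exactly what makes it independent of the factor and of the chosen root.
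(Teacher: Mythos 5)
Your proof is correct and takes essentially the same route as the paper's: both identify the root sets of the irreducible factors $f_i(X)$ with the orbits of $\mathrm{Gal}(F(\zeta_n)/F)$ acting on the primitive $n$-th roots of unity. The only difference is one of completeness — where the paper asserts the independence of the sequence $\{r_1,\ldots,r_s\}$ as ``clear,'' you make it precise by embedding the Galois group as a subgroup $G \leq (\mathbb{Z}/n\mathbb{Z})^{\times}$ and observing that the exponent set is literally $G$, hence the same for every factor and every choice of root.
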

\begin{proof}\rm
	The splitting field of $\Phi_{n}(X)$ over $F$ is $F(\zeta)$.
	The Galois group Gal$(F(\zeta)/F)$, 
	permutes all the roots of $\Phi_n(X)$. The set all the roots of $f_i(X)$
	is: $\{\sigma(\zeta)\, | \, \sigma \in {\rm Gal}(F(\zeta)/F)\}$.
	So the set of all the roots of 
	$f_i(X)$ is: $\{\zeta^{r_1},\zeta^{r_2},\ldots,\zeta^{r_s}\}$, 
	where $r_i$'s are natural numbers with $r_1 = 1$. 
	%Therefore the degree of  $f_i(X)$ is $s$, and So the set of all the roots
%	of $f_i(X)$ is $\{\zeta^{r_1}, \zeta^{r_2},\cdots, \zeta^{r_{s}}\}$, 
%	where $r_i$'s are natural numbers with $r_{1} = 1$.
As $F$ is a field of characteristic $0$ or prime to $n$, therefore the degree 
of  $f_i(X)$ is $s$, and it is clear that the degrees of  $f_{i}(X)$'s are same. 
It is also clear that the sequence $\{{r_1} = 1, r_2,\cdots,r_s\}$ is independent of roots of  $f_i(X)$ up to the order and also independent of irreducible factors of $\Phi_n(X)$.
\end{proof}
\section{Proof of Theorem $1.3$}
\subsection{Proof of (1)}
If  
$(\eta, W) \ncong (\eta^x, W^x)$, then all the irreducible representations 
$\{(\eta^{x^i},W^{x^i})\}_{i=0}^{p-1}$ are pairwise inequivalent. 
So $(\eta, W)$ is not 
equivalent to $(\eta^s, W^s)$ for any
$s\in {G - H}$. Hence, by the Mackey irreducibility criterion 
(see \cite{cur93}, Theorem $45.2$)
$\eta \uparrow_{H}^{G}$ is irreducible and also
%By Frobenius reciprocity formula 
%(see \cite{ser118}, Chapter$7$, Theorem$13$),
%$ \rho  = \eta \uparrow_{H}^{G}$ 
%and also
\begin{center}
$ \eta \uparrow _{H}^{G} \cong \eta^x \uparrow _{H}^{G} \cong 
\cdots \cong \eta^{x^{p-1}}\uparrow _{H}^{G}.$
\end{center}
This completes the proof of the statement $(1)$.

Suppose that $\eta \cong \eta^{x}$. Let 
$$\eta \otimes_{F}{\bar{F}} \cong m(\eta_{1} \oplus \eta_{2} \oplus \dots \oplus \eta_{k}),$$
where $\eta_{i}'$s are Galois conjugates over $F$ and $m$ is Schur indices of $\eta_{i}$'s over $F$. By lemma \ref{lem} one can show that following two cases can arise:
\begin{enumerate}
	\item[(i)] $\eta_{i} \ncong \eta^{x}_{i}$, for all $i$, $1 \leq i \leq k$;
	\item[(ii)] $\eta_{i} \cong \eta^{x}_{i}$, for all $i$, $1 \leq i \leq k$.
\end{enumerate}
\subsection{Proof of (2)}
If $\eta_{i} \ncong \eta^{x}_{i}$, for all $i$, $1 \leq i \leq k$. Write 
\begin{align*}
\eta \otimes_{F}{\bar{F}} & \cong m (\eta_{1} \oplus  \eta^{x}_{1}\oplus \dots \oplus \eta^{x^{p-1}}_{1}) \\
&\oplus m (\eta_{2} \oplus \eta^{x}_{2} \oplus \dots \oplus \eta^{x^{p-1}}_{2}) \\
& \oplus \hdots  \hspace{.5 cm} \hdots \hspace{.7 cm}  \hdots \hspace{.7cm} \hdots\oplus\\
&\oplus m (\eta_{l} \oplus \eta^{x}_{l} \oplus \dots \oplus \eta^{x^{p-1}}_{l}).
\end{align*}
Note that $lp = k$. By Lemma \ref{lem}
$$(\eta \uparrow^{G}_{H}) \otimes_{F}{\bar{F}} \cong (\eta \otimes_{F}{\bar{F}})\uparrow^{G}_{H} \cong mp(\eta_{1}\uparrow^{G}_{H} \oplus \eta_{2}\uparrow^{G}_{H} \oplus \dots \oplus \eta_{l}\uparrow^{G}_{H}).$$
Notice that $\eta_{1}\uparrow^{G}_{H}, \eta_{2}\uparrow^{G}_{H}, \dots, \eta_{l}\uparrow^{G}_{H}$ are Galois conjugate over $F$. So, $\eta \uparrow^{G}_{H}$ is equivalent to direct sum of a single irreducible $F$-representation. This completes the proof of the statement $(2)$.

If $\eta_{i} \cong \eta^{x}_{i}$, for all $i$, $1 \leq i \leq k$, then each $\eta_{i}$ extends to $p$ distinct ways. For each $i$, let $\rho_{i,o}, \rho_{i, 1}, \dots , \rho_{i, p-1}$ be the $p$ extensions of $\eta_{i}$. Let $\chi_{\eta_{i}}$ be the character corresponding to the representation $\eta_{i}$. Let $e_{\eta_{i}}$ be the primitive central idempotent corresponding to $\eta_{i}$ in $\bar{F}[G]$. Let $e_{\rho_{i, j}}$ be the primitive central idempotent corresponding to $\rho_{i, j}$ in $\bar{F}[G]$. By Berman's theorem (see Theorem \ref{berman}), $(\bar{C}(x))^{p}{e_{\eta_{i}}} = {\lambda_{i}}{e_{\eta_{i}}}$, where $\lambda_{i}$ is a nonzero element in $\bar{F}$. Let $\mu_{i}$ be a $p$-th root of $\lambda_{i}$ in $\bar{F}$. By Berman's theorem (see Theorem \ref{berman}), for a fixed $i$, the primitive central idempotent corresponding to $\rho_{i, j}$ is:
$$e_{\rho_{i, j}} = \frac{1}{p}\big{\{}1 + {\zeta_{p}}^{j}\big{(}\frac{\bar{C}(x)}{\mu_{i}}\big{)} + \dots + {\zeta_{p}}^{j(p-1)}\big{(}{\frac{\bar{C}(x)}{\mu_{i}}}^{p-1}\big{)}\big{\}}
e_{\eta_{i}},$$
where $0 \leq j \leq p-1$.
\subsection{Proof of (3)} If $\mu_{i} \notin F$, for $1 \leq i \leq k$. In this case, $\rho_{i, j}$'s are Galois conjugate over $F$. So the induced representation $\eta \uparrow^{G} _{H}$ is direct sum of single irreducible $F$-representation. This completes the proof of the statement $(3)$.
\subsection{Proof of (4)}
If  $\mu_{i} \in F$, for $1 \leq i \leq k$. In this case, $\mu _{1} = \mu_{2} = \dots = \mu_{k} = \mu$ (say). 
%we distinguish two cases:\\
If $\mu \in F, \zeta_{p} \in F$, then the induced representation $\eta \uparrow^{G}_{H}$ is direct sum of $p$ irreducible $F$-representations. Let $\psi_{j}$, $j = 0, 2, \dots , p-1$ be the $p$ irreducible $F$-representations. Then
$$\psi_{j} = m_{j} ({\rho_{1,j}} \oplus {\rho_{2, j}} \oplus \dots \oplus {\rho_{k, j}}),$$
where $0 \leq j \leq p-1$, and $m_{j}$ is the Schur index of each $\rho_{i, j}$. So, $$\eta \uparrow^{G}_{H} = \bigoplus^{p - 1}_{j = 0}(m/m_{j})\psi_{j}.$$
This completes the proof of the statement $(4)$.

\subsection{Proof of (5)} 
If  $\mu_{i} \in F$, for $1 \leq i \leq k$. In this case, $\mu _{1} = \mu_{2} = \dots = \mu_{k} = \mu$ (say). 
%we distinguish two cases:\\
%If $\mu \in F, \zeta_{p} \notin F$,
Assume that $\mu \in F, \zeta_{p} \notin F$. As $\mu \in F$, then ${\rho_{1,o}}, {\rho_{2, o}}, \dots , {\rho_{k, o}}$ are Galois conjugate over $F$, and the Schur index is equal to $m$. So, $\eta$ extends to an irreducible $F$-representation $m ({\rho_{1,o}} \oplus {\rho_{2, o}} \oplus \dots \oplus {\rho_{k, o}})$, say $\rho$. Then 
$$\eta \uparrow^{G}_{H} = F[G/H] \otimes_{F} \rho.$$ Let $\Phi_{p}(X) = f_{1}(X)f_{2}(X) \dots f_{k}(X)$ be the factorization into irreducible polynomials over $F$. Then $X^{p} - 1 = (X - 1)f_{1}(X)f_{2}(X) \dots f_{k}(X)$ be the factorization into irreducibles over $F$. So
$F[G/H] \approx F[X]/\langle{X - 1}\rangle \oplus^{k}_{i = 1} F[X]/\langle{f_{i}(X)}\rangle.$ In this case, $\eta \uparrow^{G}_{H}$ is direct sum of $1 + k$ many distinct irreducible $F$-representations of $G$.
This completes the proof of the statement $(5)$.

\end{document}